\newtheorem{theorem}{Theorem}[section]
\newtheorem{proposition}[theorem]{Proposition}
\newtheorem{corollary}[theorem]{Corollary}
\newtheorem{remark}[theorem]{Remark}
\newcommand{\Pmk}{\mathcal{P}^-_{k}}
\newcommand{\Ppk}{\mathcal{P}^+_{k}}
\newcommand{\Ppo}{\mathcal{P}^+_{1}}
\newcommand{\Pmo}{\mathcal{P}^-_{1}}
\newcommand{\R}{\mathbb R}
\newcommand{\RN}{\mathbb R^N}
\newcommand{\SN}{\mathcal{S}^N}
\newcommand{\tr}{{\rm Tr}}
\title{{The Dirichlet problem for fully nonlinear degenerate elliptic equations with a singular nonlinearity}}
\author{Isabeau Birindelli and Giulio Galise
}
\date{}
\begin{document}

\maketitle

\begin{abstract}
\noindent
We investigate the homogeneous Dirichlet problem in uniformly convex domains for a large class of  degenerate elliptic equations with singular zero order term. In particular we establish sharp existence and uniqueness results of positive viscosity solutions.   
\end{abstract}

\vspace{0.5cm}

{\small
\noindent
\textbf{MSC 2010:} 35A01, 35B09, 35D40, 35J70, 35J75. 

\smallskip
\noindent
\textbf{Keywords:} Singular elliptic equations, viscosity solutions.  }

\section{Introduction}
In this article we investigate the existence/nonexistence of positive viscosity solutions of the singular boundary value problem
\begin{equation}\label{pb}
\left\{\begin{array}{cl}
F(D^2u)+p(x)u^{-\gamma}=0 & \text{in $\Omega$}\\
u=0 & \text{on $\partial\Omega$},
\end{array}\right.
\end{equation} 
where $\gamma>0$ and  the domain $\Omega\subset\mathbb R^N$ is  bounded and uniformly convex. 

Such kind of problem when $F$ is linear or quasilinear has been widely studied since the seminal works \cite{CRT,LM}. The survey  \cite{HM} is a good reference where an extensive literature on this subject is available. On the other hand less is known in the fully nonlinear setting. In \cite{FQS} the authors extend the  existence and regularity results of solutions as in the semilinear case to Hamilton-Jacobi-Bellman and Isaacs uniformly elliptic operators. As far as we know there are no works dealing with pure degenerate elliptic equations of fully nonlinear type. Our aim is to study \eqref{pb} in the following quite general framework:
\smallskip

\noindent the mapping $F:{\mathcal S}^N\mapsto \mathbb R$ is  continuous in $\SN$, the linear space of symmetric $N\times N$ real matrices, and  degenerate elliptic, i.e.
\begin{equation}\label{H1}\tag{H1}
F(X+Y)\geq F(X)\quad\forall X,Y\in{\mathcal S}^N,\,Y\geq0,
\end{equation}
and there exists an integer $k\in[1,N]$ such
\begin{equation}\label{H2}\tag{H2}
\Pmk(X)\leq F(X)\leq\Ppk(X)\quad\forall X\in{\mathcal S}^N.
\end{equation}
The operators $\mathcal{P}^\pm_k$ are respectively defined by the lower and upper partial sums
\begin{equation}\label{Pk}
\Pmk(X)=\sum_{i=1}^k\lambda_i(X),\quad\Ppk(X)=\sum_{i=1}^k\lambda_{N-k+i}(X)
\end{equation}
of the ordered eigenvalues $\lambda_1(X)\leq\cdots\leq \lambda_N(X)$ of $X\in\SN$. Let us mention that these extremal operators have recently generated some interest, starting with the works of Harvey and Lawson e.g.  \cite{HL1,HL2}. See also \cite{CLN,CDLV,BGL,G}.

Since we are mainly interested in degenerate equations and the results we shall presents are new when $k$ is strictly less than the dimension $N$ we assume from now on $k<N$.\\ 
The function $p:\Omega\mapsto\mathbb R$ is continuous and  satisfies for $\alpha\geq\beta$ the growth assumption
\begin{equation}\label{H3}\tag{H3}
c_1\,\delta_\Omega(x)^\alpha\leq p(x)\leq c_2\,\delta_\Omega(x)^\beta
\end{equation}
where $\delta_\Omega(x)={\rm dist}(x,\partial\Omega)$   and  $c_1,\,c_2$ are positive constants.

\medskip

Here is our existence and uniqueness result. 

\begin{theorem}\label{th1}
Assume \eqref{H1}-\eqref{H2}-\eqref{H3}, $\beta>-1$ and $\Omega$ uniformly convex. Then for any $\gamma>0$ there exists a unique $u\in C(\overline\Omega)$ positive viscosity solution of \eqref{pb}.
\end{theorem}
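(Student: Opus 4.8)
The plan is to run Perron's method between an explicit subsolution and supersolution of \eqref{pb}, the admissibility of the method being guaranteed by a comparison principle tailored to the singular term. By \eqref{H2}, every viscosity supersolution of $\Ppk(D^2u)+p(x)u^{-\gamma}=0$ is a supersolution of \eqref{pb} and every subsolution of $\Pmk(D^2u)+p(x)u^{-\gamma}=0$ is a subsolution of \eqref{pb}, so it suffices to build barriers for the two extremal operators. Two structural facts are used repeatedly: (i) $t\mapsto t^{-\gamma}$ is strictly decreasing, so the equation is strictly monotone in $u$ on any compact subset of $\Omega$ where a candidate is bounded and bounded away from $0$; and (ii) with the convention $0^{-\gamma}=+\infty$, a nonnegative viscosity supersolution cannot vanish at an interior point, since testing with the constant $0$ would force $F(0)+p(x_0)(+\infty)\le 0$; hence positivity in $\Omega$ is automatic and need not be enforced by the subsolution.

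\textbf{The supersolution.} This is the technical heart. Set $\overline u=A\,\psi(\delta_\Omega)$ with $\delta_\Omega(x)=\mathrm{dist}(x,\partial\Omega)$, $A$ large, and $\psi\in C^2$ increasing, strictly concave, $\psi(0)=0$, $\psi(t)\sim t^{\mu}$ as $t\to 0^+$ with $\mu=\min\{1,(\beta+1)/(\gamma+1)\}$ (replaced by a slightly smaller exponent if this equals $1$, so that $\psi''<0$ throughout; harmless). Uniform convexity gives $\delta_\Omega\in C^{1,1}$ in a tube $\{0<\delta_\Omega<d_0\}$, where $D^2\delta_\Omega$ has the eigenvalue $0$ in the normal direction and $-\kappa_i/(1-\kappa_i\delta_\Omega)\le-\kappa_{\min}<0$ in the tangential ones; thus at such points $D^2\overline u$ has eigenvalues $A\psi''(\delta_\Omega)<0$ and $-A\psi'(\delta_\Omega)\kappa_i/(1-\kappa_i\delta_\Omega)<0$. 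Since $k<N$ there are at least $k$ tangential eigenvalues, and the normal one is (for $\delta_\Omega$ small) the most negative, so the $k$ largest eigenvalues are tangential and
\[
\Ppk(D^2\overline u)\le -A\,k\,\kappa_{\min}\,\psi'(\delta_\Omega)\qquad\text{on }\{\delta_\Omega<d_0\}.
\]
Comparing with $p\,\overline u^{-\gamma}\le c_2\,\delta_\Omega^{\beta}A^{-\gamma}\psi(\delta_\Omega)^{-\gamma}$ and using $\psi(t)\sim t^\mu$, the choice $\mu(\gamma+1)\le\beta+1$ makes the negative term dominate near $\partial\Omega$ once $A$ is large. On the compact set $\{\delta_\Omega\ge d_0\}$ one has $\Ppk(D^2\overline u)\le k\,\lambda_{\max}(D^2\overline u)\le-ckA$ at points where $\delta_\Omega$ is twice differentiable, while $p\,\overline u^{-\gamma}=O(A^{-\gamma})$, so large $A$ works again; and where $\delta_\Omega$ fails to be $C^{1,1}$ (its cut locus, contained in $\{\delta_\Omega\ge d_0\}$) the function $\overline u$ is concave, being the composition of the concave $\delta_\Omega$ with the increasing concave $\psi$, so no smooth function touches it from below there and the supersolution inequality is vacuous. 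Hence $\overline u\in C(\overline\Omega)$, $\overline u>0$ in $\Omega$, $\overline u=0$ on $\partial\Omega$ is a supersolution of \eqref{pb}. This step is exactly where $\beta>-1$ (so $\mu>0$) and uniform convexity are needed.

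\textbf{The subsolution.} Fix a ball $B_\rho(x_*)\subset\subset\Omega$ and put $\underline u=c\,(\rho^2-|x-x_*|^2)_+$, extended by $0$ on $\Omega\setminus B_\rho(x_*)$. In $B_\rho(x_*)$, $D^2\underline u=-2cI$, so $\Pmk(D^2\underline u)=-2kc$, while $p\,\underline u^{-\gamma}\ge p_*(c\rho^2)^{-\gamma}$ with $p_*=c_1\min_{\overline{B_\rho(x_*)}}\delta_\Omega^{\alpha}>0$; taking $c$ small makes $\Pmk(D^2\underline u)+p\,\underline u^{-\gamma}\ge0$ there. Outside $\overline{B_\rho(x_*)}$ the function is the constant $0$, which is a subsolution because the singular term is $+\infty$; and at $\partial B_\rho(x_*)$, where $\underline u$ attains its minimum $0$, any test function touching from above has nonnegative Hessian, whence $\Pmk\ge0$ and the inequality again holds. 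Shrinking $c$ once more we arrange $\underline u\le\overline u$ on $\overline\Omega$. (Positivity of $\underline u$ everywhere is unnecessary by observation (ii).)

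\textbf{Comparison, existence, uniqueness.} Claim: if $v$ (u.s.c.) is a bounded subsolution and $w$ (l.s.c.) a supersolution of \eqref{pb} with $w>0$ in $\Omega$ and $\limsup_{x\to\partial\Omega}(v-w)\le0$, then $v\le w$ in $\Omega$. Indeed, if $M:=\sup_\Omega(v-w)>0$ it is attained at an interior point $\hat x$ with $v(\hat x)>w(\hat x)>0$; doubling variables with penalization $|x-y|^2/2\varepsilon$ and invoking the Theorem on Sums produces $X,Y\in\SN$ with $X\le Y$ and touching jets of $v$ at $x_\varepsilon$ and of $w$ at $y_\varepsilon$, $x_\varepsilon,y_\varepsilon\to\hat x$; subtracting the two viscosity inequalities and using \eqref{H1} (so $F(X)\le F(Y)$),
\[
0\le F(X)-F(Y)+p(x_\varepsilon)v(x_\varepsilon)^{-\gamma}-p(y_\varepsilon)w(y_\varepsilon)^{-\gamma}\le p(x_\varepsilon)v(x_\varepsilon)^{-\gamma}-p(y_\varepsilon)w(y_\varepsilon)^{-\gamma}.
\]
Letting $\varepsilon\to0$ and using continuity of $p$ gives $p(\hat x)v(\hat x)^{-\gamma}\ge p(\hat x)w(\hat x)^{-\gamma}$ with $p(\hat x)>0$, contradicting $v(\hat x)>w(\hat x)>0$. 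Perron's method between $\underline u$ and $\overline u$ then works: the sup $W$ of admissible subsolutions has $W^*\le\overline u$ a subsolution and $W_*\ge\underline u$ a supersolution (the bump construction is legitimate since the candidate is positive in $\Omega$), both vanishing on $\partial\Omega$; comparison forces $W^*\le W_*$, so $u:=W=W^*=W_*\in C(\overline\Omega)$ solves \eqref{pb} with $u=0$ on $\partial\Omega$ and $u>0$ in $\Omega$. Finally, applying the comparison claim to two solutions in both orders yields uniqueness. The main obstacle is the supersolution: identifying the correct boundary exponent for $\Ppmk$ (which, unlike the Laplacian, only "see" the $N-1$ tangential directions of $D^2\delta_\Omega$), and simultaneously keeping $\Ppk(D^2\overline u)$ strictly negative across the interior and the cut locus of $\delta_\Omega$.
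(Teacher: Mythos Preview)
Your Perron-with-barriers framework and comparison argument match the paper's, but both barrier constructions differ substantially from what the paper does, and your supersolution step has a gap tied to the paper's definition of uniform convexity.

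\textbf{Supersolution.} You take $\overline u=A\,\psi(\delta_\Omega)$ and compute $D^2\overline u$ via the principal-curvature formula for $D^2\delta_\Omega$ in a tube, asserting that the cut locus is contained in $\{\delta_\Omega\ge d_0\}$. But the paper's uniform convexity means only $\Omega=\bigcap_{y\in Y}B_R(y)$, with no smoothness on $\partial\Omega$; for a lens (intersection of two balls) the non-differentiability set of $\delta_\Omega$ is the bisecting segment and reaches the corners, so there is no boundary tube in which $\delta_\Omega\in C^{1,1}$. Your argument is salvageable once rewritten: wherever $\delta_\Omega$ is differentiable it coincides locally with $R-|x-y|$ for a single $y\in Y$, giving exactly the ball-Hessian you wrote (eigenvalues $0$ and $-1/(R-\delta_\Omega)$); wherever it is not differentiable, $\overline u=A\psi(\delta_\Omega)$ inherits a concave kink and no $C^2$ function can touch from below, so the test is vacuous. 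The paper sidesteps all of this by building $\overline u$ as $\inf_{y\in Y}u_y$, where each $u_y$ is an explicit radial solution (for $-1<\beta\le0$) or supersolution (for $\beta>0$) of $\Ppk(D^2u)+c_2\,\delta_{B_R(y)}^\beta u^{-\gamma}=0$ in $B_R(y)$; the infimum is then shown to be a supersolution in $\Omega$ by picking, at each test point $x_0$, the $y_0$ with $\delta_{B_R(y_0)}(x_0)=\delta_\Omega(x_0)$. Their route needs no regularity of $\partial\Omega$ and yields the sharp boundary exponent $\sigma=\min\{1,\beta+1\}/(\gamma+1)$ stated in the Corollary.

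\textbf{Subsolution.} Your bump $\underline u=c(\rho^2-|x-x_*|^2)_+$ vanishes on most of $\Omega$, and you defer positivity of the Perron candidate to observation~(ii). As written this is circular: (ii) applies to supersolutions, yet showing that $W_*$ is a supersolution via the bump construction already needs $W_*>0$ at the point under test (otherwise the ``failure of the supersolution inequality'' involves $+\infty$ and the perturbation argument does not go through). The fix is easy (take the supremum of bumps over all centers to force $W>0$ locally uniformly), but the paper's choice is cleaner: it sets $\underline u=\varepsilon\,d(x)^{(\alpha+2)/(\gamma+1)}$ with $d$ the Stein regularized distance, which is smooth and positive throughout $\Omega$, vanishes on $\partial\Omega$, and satisfies $\Pmk(D^2\underline u)+p\,\underline u^{-\gamma}\ge0$ for small $\varepsilon$ by a direct Hessian estimate.

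In short: your approach is viable and in some respects more conceptual (no explicit ODEs), but it needs the two repairs above to cover the non-smooth domains the theorem allows, whereas the paper's explicit ball-by-ball construction handles them directly and delivers quantitative boundary behaviour as a by-product.
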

\noindent
By \lq\lq uniformly convex\rq\rq\ we mean that there exists $R>0$, $Y\subset\RN$ such that
\begin{equation}\label{uniformconvexity}
\Omega=\bigcap_{y\in Y}B_R(y).
\end{equation}
As usual $B_R(y)$ stands for the ball centered at $y\in\RN$ with radius $R$. When $\Omega$ is $C^2$ this is equivalent to require that all principal curvatures of the boundary are uniformly bounded from below by a positive constant, see \cite[Proposition 2.7]{BGI1}.

\medskip

The restriction on $\beta$ in Theorem \ref{th1} is sharp within the class of operator satisfying \eqref{H2}. In fact

\begin{theorem}\label{th2}
For any $\beta\leq-1$ and any $\gamma>0$ the equation
\begin{equation}\label{pb3}
\Ppk(D^2u)+\delta_{B_1}(x)^\beta u^{-\gamma}=0 \quad \text{in $B_1$}
\end{equation} 
does not admit viscosity supersolutions.
\end{theorem}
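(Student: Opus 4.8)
The strategy is a contradiction argument built on a radial barrier. Suppose $u$ were a viscosity supersolution of \eqref{pb3} in $B_1$; in particular $u>0$ and $\Ppk(D^2u)\le -\delta_{B_1}(x)^\beta u^{-\gamma}\le 0$, so $u$ is a supersolution of $\Ppk(D^2u)=0$ and hence (by \eqref{H1} and a standard maximum-principle/comparison argument for $\Ppk$) $u$ is bounded below near $\partial B_1$ only if it does not vanish too fast; more importantly I would first record that $\delta_{B_1}(x)=1-|x|$ and that for $x$ near the boundary the equation forces a lower bound on $D^2u$ in the $\Ppk$ sense that is incompatible with $u$ being finite. The key point is that when $\beta\le -1$ the right-hand side $\delta_{B_1}(x)^\beta u^{-\gamma}$ is \emph{too singular} to be balanced by any function with bounded second derivatives in the relevant directions.

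The concrete step is to construct, for $r=|x|$ close to $1$, an explicit radial function $\varphi(x)=g(1-|x|)$ and test it against $u$ from below (i.e. locate an interior minimum of $u-\varphi$, or rather use $\varphi$ as a supersolution test/comparison function), choosing $g$ so that $\Ppk(D^2\varphi)\ge -\delta_{B_1}^\beta \varphi^{-\gamma}$ fails, or conversely so that $\varphi$ is a strict subsolution that must lie below $u$ yet blows up. Recall that for $\varphi(x)=g(1-r)$ with $g$ increasing and concave, the eigenvalues of $D^2\varphi$ are $g''(1-r)$ (radial direction, with multiplicity one) and $-\frac{g'(1-r)}{r}$ (tangential directions, multiplicity $N-1$); since $g'>0$ the tangential eigenvalues are negative, so they are the smallest, and because $k\le N-1$ we get
\[
\Ppk(D^2\varphi)(x)=\max\Big(g''(1-r)-\tfrac{(k-1)g'(1-r)}{r},\,-\tfrac{k\,g'(1-r)}{r}\Big)
\]
— in any case $\Ppk(D^2\varphi)\le g''(1-r)$ when $g''\le 0$ is large compared to $g'$, and more usefully one can pick $g$ convex near $t=0$. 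The plan is to take $g(t)=t^{\sigma}$ (or $-\log t$, or $t^\sigma$ with $\sigma$ chosen from $\gamma$ and $\beta$) on $(0,\varepsilon)$, suitably glued to a constant, so that near the boundary $\Ppk(D^2\varphi)\sim c\,t^{\sigma-2}$ while $-\delta_{B_1}^\beta\varphi^{-\gamma}\sim -t^{\beta-\sigma\gamma}$; matching exponents needs $\sigma-2=\beta-\sigma\gamma$, i.e. $\sigma=\frac{\beta+2}{1+\gamma}$, and one checks the constants can be forced so that $\varphi$ is a strict subsolution of \eqref{pb3} that is \emph{positive and bounded below by a positive constant on the inner boundary} $\{|x|=1-\varepsilon\}$ but $\to 0$ on $\partial B_1$; comparison with the supersolution $u>0$ then yields $u\ge \varphi>0$ near the boundary — which is consistent — so instead the right move is to make $\varphi$ a \emph{supersolution lower bound that is forced to be negative or to blow up}: choose $\sigma<0$ (legitimate since $\beta\le -1<-2$ would give $\sigma<0$; but $\beta$ can be in $(-2,-1]$, so more care is needed). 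When $\beta\in(-2,-1]$ one instead exploits that $\sigma=\frac{\beta+2}{1+\gamma}\in(0,1)$ and shows a supersolution would have to satisfy $u(x)\ge c\,(1-|x|)^{\sigma}$ near $\partial B_1$ \emph{and} $u(x)\le C(1-|x|)$ from $u=0$ on the boundary via the other (subsolution-type) comparison using $\Ppk(D^2u)\le 0$; since $\sigma<1$ these are compatible too, so the real contradiction must come from integrating the equation, not from boundary rates.

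Therefore the cleanest route, and the one I would actually carry out, is a \textbf{one-dimensional ODE reduction along a radius combined with an integration/Pohozaev-type identity}: restrict attention to the behavior of $u$ on a fixed radius, use that $\Ppk(D^2u)\le -\delta^\beta u^{-\gamma}$ together with the structure of $\Ppk$ to derive a differential inequality of the form $\big(\text{something like } u'' \text{ or } (r^{k-1}u')'\big)\le -c\,(1-r)^\beta u^{-\gamma}$ on a radial segment, multiply by a suitable weight and integrate from $1-\varepsilon$ to $1$; because $\beta\le-1$ the integral $\int^1 (1-r)^\beta\,dr$ diverges while the left side stays finite (using $u\le C(1-r)$ near the boundary, which follows from $u=0$ on $\partial B_1$ and $\Ppk(D^2u)\le 0$ hence $u$ is below the affine barrier, via the standard comparison for $\Ppk$), and the singular factor $u^{-\gamma}\ge (C(1-r))^{-\gamma}\to\infty$ only makes the divergence worse — contradiction. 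The \textbf{main obstacle} is rigorously passing from the viscosity inequality for the PDE to a usable pointwise/integral inequality on a radius: viscosity supersolutions need not be $C^2$, so I would either (i) use the standard sup-convolution regularization to obtain a smooth approximate supersolution on a slightly smaller ball and integrate there, then pass to the limit, or (ii) work directly with the definition by testing with radial paraboloids, which is where the degenerate-ellipticity constant $k<N$ and the explicit eigenvalue formula above must be used to guarantee the test functions are admissible. Establishing the a priori upper bound $u(x)\le C\,\delta_{B_1}(x)$ (or at worst $u(x)\le C\,\delta_{B_1}(x)^{\sigma_0}$ for some $\sigma_0>0$) for supersolutions of $\Ppk(D^2u)\le 0$ is the second ingredient, and it follows by comparison with the function $C(1-|x|)$ after checking $\Ppk(D^2(C(1-|x|)))\le 0$ near $\partial B_1$ — which holds because its radial second derivative is $0$ and its tangential ones are negative, so $\Ppk$ of it is $\le 0$ given $k\le N-1$.
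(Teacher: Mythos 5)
There is a genuine gap, and it sits exactly where you switch strategies. The theorem asserts nonexistence of viscosity supersolutions of the \emph{equation} in $B_1$, with no boundary condition imposed. Your final ``integration/Pohozaev'' route hinges on the a priori estimate $u(x)\le C\,\delta_{B_1}(x)$, which you derive from $u=0$ on $\partial B_1$. That boundary condition is not a hypothesis of the theorem, so this step cannot even begin. Moreover, even if $u=0$ on $\partial B_1$ were assumed, the inequality $\Ppk(D^2u)\le 0$ makes $u$ a \emph{supersolution} of $\Ppk=0$; you cannot compare two supersolutions of a degenerate elliptic operator, and $\Ppk\bigl(D^2(C(1-|x|))\bigr)\le 0$ as you yourself note, so the affine function is also a supersolution and gives no upper bound on $u$. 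So both the hypothesis and the mechanism of the a priori bound fail.

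The unfortunate irony is that your \emph{first} idea was essentially the paper's proof, and you abandoned it one step too early. The paper builds, for each $\rho\in(0,1)$, the explicit radial function
\begin{equation*}
w(r)=\Bigl(\tfrac{1+\gamma}{k}\bigl(r-\rho+\log\tfrac{1-r}{1-\rho}\bigr)\Bigr)^{\frac{1}{1+\gamma}},
\end{equation*}
solving the ODE $k\,w'/r+(1-r)^{-1}w^{-\gamma}=0$ on $(0,\rho)$ with $w(\rho)=0$, $w'(0)=0$, and verifies $w''\le w'/r$ so that $w(|x|)$ solves $\Ppk(D^2w)+\delta_{B_1}(x)^{-1}w^{-\gamma}=0$ in $B_\rho$. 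Since $\beta\le-1$ and $\delta_{B_1}\le1$ give $\delta_{B_1}^\beta\ge\delta_{B_1}^{-1}$, any supersolution $u$ of \eqref{pb3} is a supersolution of this $\beta=-1$ problem, and comparison on $B_\rho$ (where $u>0=w$ on $\partial B_\rho$) yields $u\ge w$. Letting $\rho\to1$ forces $u\equiv\infty$ on $B_1$, the contradiction. Note that the barrier is \emph{not} a single fixed power $g(t)=t^\sigma$ matched at the boundary, which is why your exponent-matching ($\sigma=\frac{\beta+2}{1+\gamma}\in(0,1)$ for $\beta\in(-2,-1]$) gave nothing: the correct barrier carries the free parameter $\rho$, and the logarithm that appears when you integrate $s/(1-s)$ is exactly what makes $w$ blow up at every interior point as $\rho\to1$. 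Nothing about $u$ near $\partial B_1$ is needed.

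Two smaller points. Your eigenvalue formula for $\Ppk(D^2\varphi)$ with $\varphi$ radial and $k\le N-1$ is correct and is indeed the computation the paper uses (there it reduces to showing $w''\le w'/r$). And your instinct that the comparison principle, rather than any pointwise differentiation of a viscosity supersolution, should be the workhorse is right; the clean way to implement it is precisely to make the test function a genuine (smooth, radial) solution on a shrinking family of balls, sidestepping the regularity issues you were worried about.
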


Let us try to explain the main difficulties that arise  outside the uniformly elliptic framework in order to obtain existence of solutions for \eqref{pb}. Since the classical work of Lazer and McKenna \cite{LM},  the approach typically used consists in manipulating the principal eigenfunctions of $F$  to get barrier functions, so applying the method of sub and supersolutions. In our case the first obstruction in following this approach concerns the minimal operator $\Pmk$. In \cite{BGI1} it has been proved that the the operator $\Pmk(D^2\cdot)+\mu\cdot$ satisfies the maximum principle independently on $\mu$, so preventing the existence of positive eigenfunctions. However an inspection of the proofs given in \cite{LM,FQS} shows that taking advantage of the presence of $u^{-\gamma}$ in the equation, the only property of the eigenfunction needed to construct a subsolution is that eigenfunctions look like the distance function $\delta_\Omega$ near the boundary. In view of this we shall employ a regularized version of $\delta_\Omega$ to provide a subsolution of \eqref{pb} null on $\partial\Omega$. Moreover without requiring regularity of $\partial\Omega$.\\ 
As far as the existence of a supersolution is concerned, let us emphasize that the zero order term is now competitive with $F$ and so to gain some  \lq\lq negativity\rq\rq\ the principal part  have to absorb the term $u^{-\gamma}$. When $F$ is  uniformly elliptic operators this is achieved by using functions of principal eigenfunctions. To succeed in the case of $\Ppk$, some extra assumption on $\Omega$ are needed due to the strong degeneracy of the operator. In particular we request $\Omega$ to be uniformly convex.


Let us emphasize that  Theorem \ref{th1} holds for a large class of degenerate elliptic operators, as it is shown in Section \ref{Sec2}. Also the class of domains we consider does not require any regularity, including for instance domains with corners. Finally we present a proof   without using the standard regularization $(u+\frac{1}{n})^{-\gamma}$ with $n\to\infty$. 

 The result of Theorem \ref{th2} is  different with respect to the case of the Laplacian. In fact when $k=N$, i.e. ${\mathcal P}^+_N\equiv\Delta$, the existence and uniqueness of solutions for $\Delta u+\delta_{B_1}(x)^\beta u^{-\gamma}=0$ in $\Omega$, $u=0$ on $\partial\Omega$,  still holds for $\beta\leq-1$. See \cite[Section 4-v]{LM}. 

In Section 2 we collect some examples of degenerate operators satisfying \eqref{H1}-\eqref{H2} and some preliminary results used in the rest of the article. Section 3 is devoted to the proofs of Theorems \ref{th1}-\ref{th2}. In Section 4 we generalize the above theorems including first order terms showing that also in this case new nonexistence phenomena occur.

\section{Examples and preliminary results}\label{Sec2}

The class of operators satisfying conditions \eqref{H2} is quite large and contains important examples, we include a few of them here. 
\subsection{Examples}

\noindent
\textbf{1. Linear operators with $k$-directions of uniformly ellipticity}\\
Equations of the form
\begin{equation*}
\sum_{i=1}^k\frac{\partial^2 u}{\partial x_{j_i}^2}+p(x)u^{-\gamma}=0\quad\text{in $\Omega$}
\end{equation*}
fits into our framework. Here $1\leq j_1<j_2<\ldots<j_k\leq N$ are integer numbers. 
The corresponding operators $F:{\mathcal S}^N\mapsto\mathbb R$  are 
\begin{equation*}
F(X)=\sum_{i=1}^k X e_{j_i}\cdot e_{j_i}
\end{equation*}
where $\left\{e_1,\ldots,e_N\right\}$ is the standard basis of $\mathbb R^N$. We have
\begin{equation*}
\begin{split}
F(X)&\leq\sup\left\{\sum_{i=1}^k X v_i\cdot v_i\;\;\text{s.t.}\;\;\text{$v_i\in\RN$ and $v_i\cdot v_j=\delta_{ij}$}\right\}=\Ppk(X)\\
F(X)&\geq\inf\left\{\sum_{i=1}^k X v_i\cdot v_i\;\;\text{s.t.}\;\;\text{$v_i\in\RN$ and $v_i\cdot v_j=\delta_{ij}$}\right\}=\Pmk(X).
\end{split}
\end{equation*} 
More general we can deal with  
$$
\tr(AD^2u)+p(x)u^{-\gamma}=0\quad\text{in $\Omega$}
$$
where $A\in\SN$ is a projection matrix on a $k$-dimensional subspace of $\RN$.

\bigskip
\noindent
\textbf{2. Functions of eigenvalues}\\
Partial sums of eigenvalues 
$$
F(X)=\sum_{i=1}^k\lambda_{j_i}(X),
$$
including the extremal case:
\begin{enumerate}
	\item $k=N$, $F(D^2u)=\Delta u$ 
	\item $k=1$, $F(D^2u)=\lambda_j(D^2u)$.
\end{enumerate}

\noindent As further example of the generality  we are concerning with we can also consider Bellmann/Isaacs type operators such as
$$
F(X)=\sup_{\alpha}\inf_{\beta} F_{\alpha,\beta}(X)
$$
where the parameters $\alpha,\beta$ lies in some sets on index $\mathcal A, \mathcal B$ and the operators $F_{\alpha,\beta}$ satisfy \eqref{H1}-\eqref{H2}.

In fact the class of operators for which Theorem \ref{th1} holds is much larger. It includes operators that may depends also on $x, u, Du$ as long they are bounded from above by $\Ppk$ and from below by $\Pmk$ and for which comparison principle holds. For example

\bigskip
\noindent
\textbf{3. Infinity Laplacian}\\
Consider the 1-homogeneous infinity  Laplacian
\begin{equation*}
\begin{split}
\Delta_\infty u&=\frac{1}{|Du|^2}\,D^2uDu\cdot Du\\
&=\frac{1}{|Du|^2}\tr\left(D^2u Du\otimes Du\right).
\end{split}
\end{equation*}
The nonlinearity $F(q,X)=\frac{1}{|q|^2}\tr\left(Xq\otimes q\right)$ is degenerate elliptic on the set $\SN\times\RN\backslash\left\{0\right\}$ and undefined at $q=0$. Following \cite[Section 9]{CIL} we have to use the lower and upper semicontinuous extension of $F$ to $(0,X)$ given by
$$
\underline F(q,X)=\left\{
\begin{array}{cl}
F(q,X) & \text{if $q\neq0$}\\
\lambda_1(X) & \text{if $q=0$}
\end{array}
\right.
\quad\text{and}\quad
\overline F(q,X)=\left\{
\begin{array}{cl}
F(q,X) & \text{if $q\neq0$}\\
\lambda_N(X) & \text{if $q=0$}
\end{array}
\right.
$$
in such a way comparison principle applies. Condition \eqref{H2} is satisfied with $k=1$, in the sense that 
$$
\Pmo(X)\leq\underline F(q,X)\leq\overline F(q,X)\leq\Ppo(X)\,.
$$

\medskip
\noindent
See also Remark \ref{further example} for further generalizations.

\subsection{Preliminary results}

Since the dependence in $u$ in the equation $F(D^2u)+p(x)u^{-\gamma}=0$ is monotone decreasing, then the standard arguments for comparison principle of \cite{CIL} applies. Hence we have the following

\begin{theorem}[\textbf{Comparison principle}]\label{CP}
Assume \eqref{H1}-\eqref{H2} and $p(x)$ positive in $\Omega$. If $u,\,v$ are respectively viscosity sub and supersolution of 
$$F(D^2u)+p(x)u^{-\gamma}=0 \quad \text{in $\Omega$}$$
and $\limsup_{x\to\partial\Omega}(u-v)(x)\leq0$, then $u\leq v$ in $\Omega$.
\end{theorem}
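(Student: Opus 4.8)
The plan is to run the classical Crandall--Ishii--Lions doubling-of-variables argument; the key point is that the singular zero order term, far from being an obstruction, is precisely what rescues comparison for the strongly degenerate operator $F$. Set $G(x,s,X)=F(X)+p(x)\,s^{-\gamma}$. On $\Omega\times(0,\infty)\times\SN$ the map $s\mapsto G(x,s,X)$ is \emph{strictly} decreasing (since $\gamma>0$ and $p>0$), $X\mapsto G(x,s,X)$ is nondecreasing by \eqref{H1}, and $G$ depends on $x$ only through the continuous factor $p$; thus the equation is proper in the sense of \cite{CIL} and --- $F$ itself being independent of $x$ --- the usual structure condition is trivially satisfied. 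So it only remains to check that the singularity of $s\mapsto s^{-\gamma}$ at $s=0$ does not spoil the scheme.

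Arguing by contradiction, suppose $M:=\sup_\Omega(u-v)>0$. Since $u-v$ is upper semicontinuous on the bounded set $\overline\Omega$ and $\limsup_{x\to\partial\Omega}(u-v)\le0<M$, the value $M$ is finite and attained at an interior point. Doubling the variables, for $\varepsilon>0$ let $(x_\varepsilon,y_\varepsilon)$ maximize $\Phi_\varepsilon(x,y)=u(x)-v(y)-\frac{1}{2\varepsilon}|x-y|^2$ over $\overline\Omega\times\overline\Omega$. By the standard penalization lemma, $\frac{1}{\varepsilon}|x_\varepsilon-y_\varepsilon|^2\to0$ and, along a subsequence, $x_\varepsilon,y_\varepsilon\to\hat x$ with $u(x_\varepsilon)\to u(\hat x)$, $v(y_\varepsilon)\to v(\hat x)$, $u(\hat x)-v(\hat x)=M$; and using the boundary condition together with $M>0$ one gets $\hat x\in\Omega$, so $x_\varepsilon,y_\varepsilon\in\Omega$ for $\varepsilon$ small and $p(\hat x)>0$.

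Next I would apply Ishii's Theorem on Sums: there are $X_\varepsilon,Y_\varepsilon\in\SN$ with $X_\varepsilon\le Y_\varepsilon$ and $(p_\varepsilon,X_\varepsilon)\in\overline{J}^{2,+}u(x_\varepsilon)$, $(p_\varepsilon,Y_\varepsilon)\in\overline{J}^{2,-}v(y_\varepsilon)$, where $p_\varepsilon=\frac{x_\varepsilon-y_\varepsilon}{\varepsilon}$. The sub- and supersolution inequalities then read
\begin{equation*}
F(X_\varepsilon)+p(x_\varepsilon)\,u(x_\varepsilon)^{-\gamma}\ge0\ge F(Y_\varepsilon)+p(y_\varepsilon)\,v(y_\varepsilon)^{-\gamma},
\end{equation*}
which in particular forces $v(y_\varepsilon)>0$; combined with $F(X_\varepsilon)-F(Y_\varepsilon)\le0$ (from $X_\varepsilon\le Y_\varepsilon$ and \eqref{H1}) this gives
\begin{equation*}
0\ge p(y_\varepsilon)\,v(y_\varepsilon)^{-\gamma}-p(x_\varepsilon)\,u(x_\varepsilon)^{-\gamma}.
\end{equation*}
Now let $\varepsilon\to0$. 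If $v(\hat x)>0$, the right-hand side converges to $p(\hat x)\big(v(\hat x)^{-\gamma}-u(\hat x)^{-\gamma}\big)$, which is strictly positive because $p(\hat x)>0$ and $0<v(\hat x)<u(\hat x)$, using the strict monotonicity of $s\mapsto s^{-\gamma}$ on $(0,\infty)$ --- a contradiction. If $v(\hat x)=0$, then $u(\hat x)=M>0$, so $p(x_\varepsilon)u(x_\varepsilon)^{-\gamma}$ stays bounded while $p(y_\varepsilon)v(y_\varepsilon)^{-\gamma}\to+\infty$ (as $p$ is continuous and positive near $\hat x$), again contradicting the last inequality for $\varepsilon$ small. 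Hence $M\le0$, i.e. $u\le v$ in $\Omega$.

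I do not anticipate a genuine obstacle: this is the textbook scheme, as the paper itself indicates. The one delicate feature, which I have emphasized, is that since $F$ is merely degenerate elliptic the inequality $X_\varepsilon\le Y_\varepsilon$ alone does not suffice; it is the \emph{strict} decrease of the zero order term in $u$ (together with $p>0$) that produces the contradiction, while the blow-up of $u^{-\gamma}$ as $u\to0^+$ only strengthens it.
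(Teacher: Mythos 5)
Your proof is correct and is precisely the elaboration of what the paper intends by invoking "the standard arguments for comparison principle of [CIL]": the paper itself gives no argument, merely citing the monotonicity of the zero order term, and your doubling-of-variables scheme is exactly that. You also correctly identify the two non-routine points that the paper leaves implicit — that it is the \emph{strict} decrease of $s\mapsto p(x)s^{-\gamma}$ (not mere degenerate ellipticity of $F$) that produces the contradiction, and that the singularity at $s=0$ only helps (forcing $v(y_\varepsilon)>0$ and giving blow-up in the borderline case $v(\hat x)=0$).
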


Next Theorem is extracted from \cite[Chapter VI, \S 2]{S}

\begin{theorem}[\textbf{Regularized distance}]\label{dist}
Let $\Omega\subset\RN$ be a bounded domain. There exists a function $d(x)=d(x,\partial\Omega)$ such that for any $x\in\Omega$ 
\begin{itemize}
	\item[a)] $\displaystyle C_1\delta_\Omega(x)\leq d(x)\leq C_2\delta_\Omega(x)$
	\item[b)] $d\in C^\infty(\Omega)$ and for any multiindex $\alpha$
	$$
	\left|\frac{\partial^\alpha}{\partial x^\alpha}d(x)\right|\leq B_\alpha \delta_\Omega(x)^{1-|\alpha|}
	$$
	where $C_1,\,C_2,\,B_\alpha$ are independent on $\partial\Omega$.
\end{itemize}
 
\end{theorem}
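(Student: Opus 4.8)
The plan is to reproduce, in the notation of the statement, the classical construction of Stein based on a Whitney decomposition of $\Omega$. First I would invoke the Whitney covering lemma: since $\Omega$ is a bounded open set, it can be written as a union of closed dyadic cubes $\{Q_j\}_j$ with pairwise disjoint interiors such that $\mathrm{diam}(Q_j)\le \mathrm{dist}(Q_j,\partial\Omega)\le 4\,\mathrm{diam}(Q_j)$, and such that, writing $\ell_j$ for the side length of $Q_j$ and $Q_j^\ast:=\tfrac{9}{8}Q_j$ for the slightly dilated cube, whenever $Q_i^\ast\cap Q_j^\ast\neq\emptyset$ the side lengths $\ell_i,\ell_j$ are comparable up to a factor $4$, and at most a dimensional number $N_0=N_0(N)$ of the $Q_j^\ast$ meet at any given point. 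To this covering one attaches a smooth partition of unity $\{\varphi_j\}$ with $\mathrm{supp}\,\varphi_j\subset Q_j^\ast$, $\sum_j\varphi_j\equiv 1$ on $\Omega$, and $|\partial^\alpha\varphi_j(x)|\le C_\alpha\,\ell_j^{-|\alpha|}$ for every multiindex $\alpha$, with $C_\alpha$ depending only on $N$ and $\alpha$. Every constant produced up to this point depends on the dimension only, never on $\partial\Omega$.

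Next I would simply set
\[
d(x):=\sum_j \ell_j\,\varphi_j(x),\qquad x\in\Omega .
\]
Being a locally finite sum of smooth functions, $d\in C^\infty(\Omega)$. For the two–sided bound a): if $x\in\mathrm{supp}\,\varphi_j\subset Q_j^\ast$, then the Whitney inequalities force $\delta_\Omega(x)$ and $\ell_j$ to be comparable with purely dimensional constants; hence $d(x)$, being a convex combination of the numbers $\ell_j$ over indices $j$ with $x\in\mathrm{supp}\,\varphi_j$, satisfies $C_1\delta_\Omega(x)\le d(x)\le C_2\delta_\Omega(x)$ with $C_1,C_2$ dimensional (in particular $d>0$ on $\Omega$).

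For the derivative estimate b) with $|\alpha|\ge 1$, the key is the cancellation built into the partition of unity. Fix $x\in\Omega$ and choose an index $j_0$ with $x\in Q_{j_0}$. Differentiating $\sum_j\varphi_j\equiv 1$ gives $\sum_j\partial^\alpha\varphi_j\equiv 0$, so
\[
\partial^\alpha d(x)=\sum_j \ell_j\,\partial^\alpha\varphi_j(x)=\sum_j(\ell_j-\ell_{j_0})\,\partial^\alpha\varphi_j(x),
\]
where only the at most $N_0$ indices $j$ with $x\in Q_j^\ast$ contribute. For each such $j$ one has $\ell_j\sim\ell_{j_0}\sim\delta_\Omega(x)$, so $|\ell_j-\ell_{j_0}|\le C\,\delta_\Omega(x)$ and $|\partial^\alpha\varphi_j(x)|\le C_\alpha\ell_j^{-|\alpha|}\le C_\alpha'\,\delta_\Omega(x)^{-|\alpha|}$; multiplying and summing over the $\le N_0$ indices yields $|\partial^\alpha d(x)|\le B_\alpha\,\delta_\Omega(x)^{1-|\alpha|}$ with $B_\alpha$ depending only on $N$ and $\alpha$.

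The main obstacle—and the only place needing genuine care—is step b): one must be certain that, after subtracting $\ell_{j_0}$, the side lengths of the neighbouring cubes differ from $\ell_{j_0}$ by at most $O(\ell_{j_0})$ and that the overlap multiplicity is dimensionally bounded. Both are standard properties of the Whitney decomposition, but they are exactly what guarantees that the constants $C_1,C_2,B_\alpha$ depend on nothing but the dimension; everything else is bookkeeping. Full details are in \cite[Chapter VI, \S 2]{S}.
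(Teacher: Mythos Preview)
The paper does not actually prove this theorem; it merely quotes the result from \cite[Chapter VI, \S 2]{S}. Your sketch faithfully reproduces Stein's Whitney--decomposition construction from precisely that reference, so your approach coincides with the source the paper invokes and is correct.
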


\section{Proofs}
%
%
%
%
%

\begin{proof}[Proof of Theorem \ref{th1}] Thanks to Theorem \ref{CP} we are in a position to  use the Perron method. For its application we are going to construct continuous sub and a supersolution  of \eqref{pb} vanishing on the boundary of $\Omega$.

\medskip

\noindent
Let $d(x)$ be the regularized distance function from $\partial\Omega$, see Theorem \ref{dist}. Let
$$
\underline u(x)=\varepsilon d(x)^t
$$
where  $t=\frac{\alpha+2}{\gamma+1}$ and $\varepsilon$ is a small positive number to be determined. We have
$$
D^2\underline u(x)=\varepsilon t d(x)^{t-2}\left((t-1)Dd(x)\otimes Dd(x)+d(x)D^2d(x)\right)
$$
and, using the properties of $d$, there exists $C>0$ big enough such that
\begin{equation*}
\begin{split}
F(D^2\underline u(x))&+p(x)\underline u(x)^{-\gamma} \geq\Pmk (D^2\underline u(x))+c_1\delta(x)^\alpha\underline u(x)^{-\gamma}\\
&\geq\varepsilon t d(x)^{t-2}\left(\Pmk\left((t-1)Dd(x)\otimes Dd(x)\right)+\Pmk\left(d(x)D^2d(x)\right)\right)+c_1\delta(x)^\alpha\underline u(x)^{-\gamma}\\
&\geq-\varepsilon t d(x)^{t-2}|t-1||Dd(x)|^2+\varepsilon t d(x)^{t-1}\Pmk(D^2d(x))+c_1\varepsilon^{-\gamma}\delta(x)^\alpha d(x)^{-\gamma t}\\
&\geq \frac{\varepsilon d(x)^{t-2}}{C}\left(\frac{1}{\varepsilon^{\gamma+1}}-C^2t(1+|t-1|)\right).
\end{split}
\end{equation*}
Hence taking $\varepsilon$ small enough  
$$
F(D^2\underline u(x))+p(x)\underline u(x)^{-\gamma} \geq0\quad\text{in $\Omega$}
$$
and $\underline u=0$ on $\partial\Omega$.

\medskip
\noindent
Now we are going to construct a continuous supersolution $\overline u$ of \eqref{pb} such that $\overline u=0$ on $\partial\Omega$. For this we first look at the auxiliary problem 
\begin{equation}\label{pb2}
\left\{\begin{array}{cl}
\Ppk(D^2u)+c_2(R-|x-y|)^\beta u^{-\gamma}=0 & \text{in $B_R(y)$}\\
u=0 & \text{on $\partial B_R(y)$}
\end{array}\right.
\end{equation} 
for $\beta\in(-1,0]$. Note that $R-|x-y|=\delta_{B_R(y)}(x)$.\\
By a straightforward computation the function
\begin{equation}\label{radial solution}
u(r)=\left(\frac{c_2(1+\gamma)}{k}\left(\frac{R}{\beta+1}(R-r)^{\beta+1}-\frac{1}{\beta+2}(R-r)^{\beta+2}\right)\right)^{\frac{1}{1+\gamma}}
\end{equation}
is the solution of the ODE problem
\begin{equation*}
\left\{\begin{array}{cl}
k\frac{u'(r)}{r}+c_2(R-r)^\beta u(r)^{-\gamma}=0 & \text{for $r\in(0,R)$}\\
u'(0)=0\\
 u(R)=0.  
\end{array}\right.
\end{equation*}
Since $\beta\leq0$, $u'\leq0$ and $u>0$ in $[0,R)$ then
\begin{equation*}
\begin{split}
u''(r)&=\frac{u'(r)}{r}+\frac{c_2}{k}r(R-r)^{\beta-1}u(r)^{-\gamma-1}\left(\beta u(r)+\gamma(R-r)u'(r)\right)\\
&\leq \frac{u'(r)}{r}\,.
\end{split}
\end{equation*}
Hence, if $r=|x-y|$, the function $u$ defined by \eqref{radial solution} is the unique solution of \eqref{pb2}.\\
If $\beta>0$ then the function
\begin{equation}\label{radial supersolution}
u(r)=\left(\frac{c_2 R^\beta(1+\gamma)}{2k}(R^2-r^2)\right)^{\frac{1}{1+\gamma}}
\end{equation}
 is  a supersolution of \eqref{pb2}. This trivially follows from the inequality $(R-r)^\beta\leq R^\beta$ and the fact that \eqref{radial supersolution} is the solution in $B_R(y)$ of $\Ppk(D^2u)+c_2R^\beta u^{-\gamma}=0$, $u=0$ on $\partial B_R(y)$.\\
In this way for any $\beta>-1$ we have found a continuous supersolution of \eqref{pb2} vanishing on the boundary. Moreover if $\sigma=\min\left\{\frac{1}{\gamma+1},\frac{\beta+1}{\gamma+1}\right\}$ there exists a constant $C=C(k,\gamma,\beta,R,c_2)$ such that
\begin{equation}\label{holder}
|u(r_1)-u(r_2)|\leq C|r_1-r_2|^\sigma
\end{equation}
for any $r_1,\,r_2\in[0,R]$.\\
Now we use the uniformly convexity of  $\Omega$, i.e.
\begin{equation}
\Omega=\bigcap_{y\in Y}B_R(y),
\end{equation}
to provide $\overline u$. For any $y\in Y$ and $x\in B_R(y)$ let us denote by $u_y(r)$, $r=|x-y|$, the supersolution of \eqref{pb2} constructed above. Define
\begin{equation}\label{supersolution}
\overline u(x)=\inf_{y\in Y}u_y(r).
\end{equation}
We claim that $\overline u$ yields a continuous supersolution of \eqref{pb}, positive in $\Omega$ and null on $\partial\Omega$.\\
For any $x_1,\,x_2\in\overline\Omega$, using \eqref{holder}
\begin{equation*}
\begin{split}
|\overline u(x_1)-\overline u(x_2)|&\leq\sup_{y\in Y}|u_y(|x_1-y|)-u_y(|x_2-y|)|\\
&\leq C\left||x_1-y|-|x_2-y|\right|^\sigma\leq C|x_1-x_2|^\sigma.
\end{split}
\end{equation*}
Hence $\overline u$ is H$\rm\ddot{o}$lder continuous in $\overline\Omega$.\\
We now show that $\overline u$ is positive in $\Omega$. Let $x_0\in\Omega$, in this way $\delta_\Omega(x_0)>0$.  Moreover, since $\Omega\subseteq B_R(y)$, for any $y\in Y$ it holds that $\delta_{B_R(y)}(x_0)\geq \delta_\Omega(x_0)$. If $\beta\in(-1,0]$ 
\begin{equation}\label{lb1}
\begin{split}
u_y(|x_0-y|)&=\left(\frac{c_2(1+\gamma)}{k}\left(\frac{R}{\beta+1}(\delta_{B_R(y)}(x_0))^{\beta+1}-\frac{1}{\beta+2}(\delta_{B_R(y)}(x_0))^{\beta+2}\right)\right)^{\frac{1}{1+\gamma}}\\
&\geq \left(\frac{c_2(1+\gamma)}{k}\left(\frac{R}{\beta+1}(\delta_\Omega(x_0))^{\beta+1}-\frac{1}{\beta+2}(\delta_\Omega(x_0))^{\beta+2}\right)\right)^{\frac{1}{1+\gamma}}
\end{split}
\end{equation} 
where we have used the monotonicity of the map $t\in [0,R]\mapsto \frac{R}{\beta+1}t^{\beta+1}-\frac{1}{\beta+2}t^{\beta+2}$.\\
If $\beta>0$ it holds
\begin{equation}\label{lb2}
\begin{split}
u_y(|x_0-y|)&\geq\left(\frac{c_2 R^{\beta+1}(1+\gamma)}{2k}\delta_{B_R(y)}(x_0)\right)^{\frac{1}{1+\gamma}}\\
&\geq \left(\frac{c_2 R^{\beta+1}(1+\gamma)}{2k}\delta_\Omega(x_0)\right)^{\frac{1}{1+\gamma}}.
\end{split}
\end{equation}
Since the lower bounds in \eqref{lb1}-\eqref{lb2} are positive and independent on $y\in Y$ then $\overline u$ is strictly positive in $\Omega$.\\
As far as the boundary Dirichlet condition is concerned fix any $x_0\in\partial\Omega$. Then there exists $y_{x_0}\in Y$ such that $x_0\in\partial B_R(y_{x_0})$ and 
$$
\overline u(x_0)\leq u_{y_0}(x_0)=0.
$$
Since $x_0$ is arbitrary we conclude that $\overline u=0$ on $\partial\Omega$.

 It remains to prove  that $\overline u$ is supersolution. By standard argument it is sufficient to show that for any $y\in Y$ the function $u_y$ is supersolution of 
$$
\Ppk(D^2u)+c_2\delta_\Omega(x)^\beta u^{-\gamma}=0\qquad\text{in $\Omega$}.
$$
If $\beta\geq0$ this is immediate, since by construction $u_y$ is  solution of
$$
\Ppk(D^2u)+c_2\delta_{B_R(y)}(x)^\beta u^{-\gamma}=0\qquad\text{in $B_r(y)$}
$$ 
and $\delta_{B_R(y)}(x)\geq\delta_\Omega(x)$.\\
Now we consider the case $\beta\in(-1,0)$. Let $x_0\in\Omega$ and let $\varphi\in C^2(\Omega)$ such that 
$$
(\overline u-\varphi)(x)\geq (\overline u-\varphi)(x_0)=0\quad\forall x\in\Omega.
$$
Select $y_0\in Y$, depending on $x_0$, such that 
\begin{equation}\label{point}
\delta_\Omega(x_0)=\delta_{B_R(y_0)}(x_0).
\end{equation}
 We claim that 
\begin{equation}\label{claim}
\overline u(x_0)=u_{y_0}(|x_0-y_0|).
\end{equation}
As in \eqref{lb1} for any $y\in Y$
\begin{equation*}
\begin{split}
u_y(|x_0-y|)&=\left(\frac{c_2(1+\gamma)}{k}\left(\frac{R}{\beta+1}(\delta_{B_R(y)}(x_0))^{\beta+1}-\frac{1}{\beta+2}(\delta_{B_R(y)}(x_0))^{\beta+2}\right)\right)^{\frac{1}{1+\gamma}}\\
&\geq \left(\frac{c_2(1+\gamma)}{k}\left(\frac{R}{\beta+1}(\delta_{B_R(y_0)}(x_0))^{\beta+1}-\frac{1}{\beta+2}(\delta_{B_R(y_0)}(x_0))^{\beta+2}\right)\right)^{\frac{1}{1+\gamma}}\\
&=u_{y_0}(|x_0-y_0|)
\end{split}
\end{equation*}
where we have used that fact that $\delta_{B_R(y)}(x_0)\geq \delta_\Omega(x_0)=\delta_{B_R(y_0)}(x_0)$. This implies \eqref{claim}. \\
Hence $\varphi$ is a test function touching from below $u_{y_0}$ at $x_0$. Using \eqref{point} we conclude
$$
\Ppk(D^2\varphi(x_0))+c_2\delta_\Omega(x_0)^\beta\varphi(x_0)^{-\gamma}=\Ppk(D^2\varphi(x_0))+c_2\delta_{B_R(y_0)}(x_0)^\beta\varphi(x_0)^{-\gamma}\leq0.
$$
\end{proof}

From the proof of Theorem \ref{th1} we immediately obtain the following  

\begin{corollary}
Let $u$ be the solution of \eqref{pb} provided by Theorem \ref{th1}. Then there exists positive constants $a_i=a_i(k,R,c_1,c_2,\alpha,\beta,\gamma)$ for $i=1,2$ such that 
\begin{equation}\label{estimate}
a_1\delta_\Omega(x)^{\frac{\alpha+2}{\gamma+1}}\leq u(x)\leq a_2{\delta_\Omega(x)}^\sigma,\quad x\in\overline\Omega,
\end{equation}
where $\sigma=\min\left\{\frac{1}{\gamma+1},\frac{\beta+1}{\gamma+1}\right\}$.
\end{corollary}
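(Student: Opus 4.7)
The plan is to extract both bounds directly from the barrier functions constructed during the proof of Theorem \ref{th1} and then invoke comparison.

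First I would apply the comparison principle (Theorem \ref{CP}) to conclude
$$
\underline u(x)\leq u(x)\leq \overline u(x)\qquad\text{in }\Omega,
$$
where $\underline u(x)=\varepsilon\, d(x)^{t}$ with $t=\frac{\alpha+2}{\gamma+1}$ is the subsolution built in the proof and $\overline u(x)=\inf_{y\in Y}u_y(|x-y|)$ is the supersolution. Both vanish on $\partial\Omega$, so the hypothesis $\limsup_{x\to\partial\Omega}(\underline u-u)\leq 0$ and $\limsup_{x\to\partial\Omega}(u-\overline u)\leq 0$ hold trivially.

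For the lower bound, since $\alpha\geq\beta>-1$ one has $t>0$, and property (a) of the regularized distance gives $d(x)\geq C_1\delta_\Omega(x)$. Hence
$$
u(x)\geq \underline u(x)=\varepsilon\, d(x)^{t}\geq \varepsilon\, C_1^{t}\,\delta_\Omega(x)^{\frac{\alpha+2}{\gamma+1}},
$$
so $a_1=\varepsilon C_1^{t}$ depends only on the stated parameters.

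For the upper bound I would use the Hölder estimate \eqref{holder} carried to $\overline u$ in the proof, namely
$$
|\overline u(x_1)-\overline u(x_2)|\leq C\,|x_1-x_2|^{\sigma}\qquad\forall x_1,x_2\in\overline\Omega,
$$
together with the fact that $\overline u\equiv 0$ on $\partial\Omega$. Given $x\in\Omega$, pick $x^{\ast}\in\partial\Omega$ realizing $|x-x^{\ast}|=\delta_\Omega(x)$; then
$$
u(x)\leq \overline u(x)=\overline u(x)-\overline u(x^{\ast})\leq C\,\delta_\Omega(x)^{\sigma},
$$
yielding $a_2=C$. No real obstacle here: both estimates are already embedded in the constructions of Theorem \ref{th1}, and the only point that requires a moment's care is checking that the constants $\varepsilon$ and $C$ inherited from the proof depend only on the listed parameters $k,R,c_1,c_2,\alpha,\beta,\gamma$, which follows by tracking the dependencies in the inequalities producing $\underline u$ and in the explicit formulas \eqref{radial solution}--\eqref{radial supersolution} defining $u_y$.
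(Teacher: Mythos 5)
Your proposal is correct and is precisely the reasoning the paper compresses into ``From the proof of Theorem \ref{th1} we immediately obtain the following'': the lower bound comes from comparison with the subsolution $\underline u=\varepsilon d^{\,t}$ together with $d\geq C_1\delta_\Omega$, and the upper bound from comparison with $\overline u$, the H\"older estimate \eqref{holder} inherited by $\overline u$, and $\overline u|_{\partial\Omega}=0$. The dependence of $\varepsilon$ and $C$ only on $k,R,c_1,c_2,\alpha,\beta,\gamma$ is as you say, using that the constants in Theorem \ref{dist} are universal.
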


\begin{remark}
\rm Note that in the cases $F=\Ppk$ and $\beta\leq0$   the solution of \eqref{pb} in the ball $B_R(y)$ is explicit, see \eqref{radial solution}. From this we  obtain a  more precise information of the solution $u$ near the boundary of $\Omega$ and  \eqref{estimate} reduces to
\begin{equation}\label{estimate2}
a_1\delta_\Omega(x)^\sigma\leq u(x)\leq a_2{\delta_\Omega(x)}^\sigma,\quad x\in\overline\Omega.
\end{equation}
It is remarkable that even in the simplest case $\beta=0$, the best regularity we can expect is then $C^{0,\frac{1}{1+\gamma}}(\overline\Omega)$. Accordingly $|Du(x)|\to\infty$ as $x\to\partial\Omega$, independently on $\gamma>0$. This is a main difference with respect to uniformly elliptic setting, see \cite[Theorem 1.2]{LM}, \cite[Theorem 1.2]{DP} and \cite[Theorems 2 and 8]{FQS}, where the gradient stay bounded in $\Omega$ depending on whether $\gamma<1$ or $\gamma>1$.  
\end{remark}

\begin{remark}\label{further example}
\rm It is worth to point out that the proof of Theorem \ref{th1}, in particular the construction of the supersolution defined by \eqref{supersolution}, still works for some degenerate elliptic operators which don't satisfy \eqref{H2} for any $X\in\SN$, but they do only in some proper subset of $\SN$. For instance let us the consider  the elliptic operator 
\begin{equation*}
F(q,X)=\tr\left(I-\frac{q\otimes q}{1+|q|^2}\right)X.
\end{equation*}
Note that $F(Du,D^2u)=0$ is the equation of minimal surfaces in nonparametric form. \\
If  we restrict the domain of $F$ to  $q\in\RN$ and $X\in{\mathcal S}_-=\left\{X\in\SN\;\text{s.t.}\;\lambda_1(X)\leq0\right\}$ we have
\begin{equation*}
F(q,X)\leq \tr X-\frac{|q|^2}{1+|q|^2}\lambda_1(X)\leq{\mathcal P}^+_{N-1}(X).
\end{equation*}
Then the function \eqref{supersolution},  which is concave, is in turn supersolution of the equation
\begin{equation}\label{minsurf}
\Delta u+\frac{D^2uDu\cdot Du}{1+|Du|^2}+p(x)u^{-\gamma}=0 \quad \text{in $\Omega$}.
\end{equation}
Concerning the construction of a subsolution vanishing on $\partial\Omega$ we can argue as in the above proof. Hence we obtain  existence and uniqueness for \eqref{minsurf} with $u=0$ on $\partial\Omega$. 
\end{remark}

\begin{proof}[Proof of Theorem \ref{th2}]
For any $\rho\in(0,1)$ let us consider the ODE problem
\begin{equation*}
\left\{\begin{array}{cl}
k\frac{w'(r)}{r}+\frac{w(r)^{-\gamma}}{1-r} =0 & \text{for $r\in(0,\rho)$}\\
w'(0)=0\\
 w(\rho)=0.  
\end{array}\right.
\end{equation*}
By computations
$$
w(r)=\left(\frac{1+\gamma}{k}\left(r-\rho+\log\frac{1-r}{1-\rho}\right)\right)^{\frac{1}{1+\gamma}}
$$
and 
$$
w''=\frac{w'}{r}-\frac{r}{k(1-r)^2}w^{-\gamma}+\frac{\gamma r}{k(1-r)}w^{-\gamma-1}w'\leq\frac{w'}{r}\,.
$$
Hence $w(|x|)$ is the solution of
\begin{equation}\label{pbrho}
\left\{\begin{array}{cl}
\Ppk(D^2w)+\delta_{B_1}(x)^{-1}w^{-\gamma}=0 & \text{in $B_\rho$}\\
w=0 & \text{on $\partial B_\rho$}.
\end{array}\right.
\end{equation}
Let us assume by contradiction that there exists a supersolution $u$ of \eqref{pb3}. Since $\beta\leq-1$ and $\delta_{B_1}\leq1$ then $u$ is in turn a positive supersolution of \eqref{pbrho}. The comparison principle yields  $u(x)\geq w(|x|)$ in $B_\rho$. Hence for any $x\in B_1$
$$
u(x)\geq\lim_{\rho\to1}w(|x|)=\infty
$$
contradiction. 
\end{proof}

We conclude this section by few considerations about solutions of
\begin{equation}\label{eq20}
\left\{\begin{array}{cl}
F(D^2u)+u^{-\gamma}=0 & \text{in $B_1$}\\
u=0 & \text{on $\partial B_1$}
\end{array}\right.
\end{equation}
for some explicit radial invariant operators of interests for this paper, namely  truncated Laplacians ${\mathcal P}^\pm_k$ and the infinity Laplacian, and their connection with the full Laplacian $\Delta$. \\
In the proof of Theorem \ref{th1} we found that $u(r)=\left(\frac{1+\gamma}{2k}(1-r^2)\right)^{\frac{1}{1+\gamma}}$ is the radial solution of \eqref{eq20} for $F=\Ppk$. 

In order to solve \eqref{eq20} for  $F=\Pmk$, we study the second order problem
\begin{equation*}\label{eq21}
\left\{\begin{array}{cl}
u''(r)+(k-1)\frac{u'(r)}{r}+u(r)^{-\gamma}=0 & \text{for $r\in(0,\rho)$}\\
u(\rho)=u'(0)=0.  
\end{array}\right.
\end{equation*}
Note that $r^{k-1}u'(r)$ is decreasing, hence $u$ is decreasing and positive in $[0,\rho)$. Moreover 
$$
\left(u''-\frac{u'}{r}\right)'=-\frac kr\left(u''-\frac{u'}{r}\right)+\gamma u^{-\gamma-1}u'\leq -\frac kr\left(u''-\frac{u'}{r}\right)
$$
i.e.
$\left(r^{k}\left(u''-\frac{u'}{r}\right)\right)'\leq 0$,
from which we infer that $u''\leq\frac{u'}{r}$ for any $r\in[0,\rho)$ and that $u(|x|)$ is solution of \eqref{eq20} in $B_\rho$. 
Using the scaling invariance of the problem, $v(x):=\alpha^{-\frac{2}{\gamma+1}}u(\alpha |x|)$ is solution if $u$ is, hence we can pick $\alpha$ such that $\rho=1$. Observe that $\Pmk$ acts in this setting similarly to the Laplacian in dimension $k$. 

The case $k=1$ leads us to the solution not only of $\Pmo$, but also of $\Delta_\infty$. For this set $U(x)=u(r)$. For $x\neq0$ then $|DU(x)|=|u'(r)|\neq0$ and so $\Delta_\infty U(x)=u''(r)=U(x)^{-\gamma}$. If $x=0$ one has $D^2U(0)=u''(0)I$, hence $\lambda_1(D^2U(0))=\lambda_N(D^2U(0)=-U(0)^{-\gamma}$.  
Let us point put that $U\notin C^1(\overline B_1)$ if and only if $\gamma\geq1$. This follows from the fact that for $r\in[0,1)$ it holds that  $\frac{u'(r)^2}{2}+\frac{u(r)^{1-\gamma}}{1-\gamma}=\frac{u(0)^{1-\gamma}}{1-\gamma}$ if $\gamma\neq1$ and $\frac{u'(r)^2}{2}+\log(u(r))=\log(u(0))$ if $\gamma=1$.

\smallskip

For the sake of completeness let us mention that there are also  cases in which the solution of \eqref{pb} in $B_1$ with $p(x)=\delta_{B_1}(x)^\alpha$, $\alpha>0$, is explicit as well. Consider the minimal operator $F=\Pmk$, or more general any partial sum of eigenvalues of the form $F(D^2u)=\sum_{i=1}^k\lambda_{j_i}(D^2u)$ with $k<N$  and $j_k<N$. By a straightforward computation the radial function
$$
u(|x|)=\left(\frac{1+\gamma}{k(1+\alpha)}(1-|x|)^{\alpha+1}\left(|x|+\frac{1}{\alpha+2}(1-|x|)\right)\right)^{\frac{1}{1+\gamma}}
$$ 
is solution of $F(D^2u)+\delta_{B_1}(x)^\alpha u^{-\gamma}=0$ in $B_1$, $u=0$ on $\partial B_1$ as long as $\alpha\geq\gamma$. Moreover 
$u\in C^{1}(\overline B_1)$.

\section{Generalizations}
The proofs of Theorems \ref{th1}-\ref{th2} extend to some cases of equations depending also on first order terms, such as 

\begin{equation}\label{pbH}
\left\{\begin{array}{cl}
F(D^2u)+H(x,Du)+p(x)u^{-\gamma}=0 & \text{in $\Omega$}\\
u=0 & \text{on $\partial\Omega$}
\end{array}\right.
\end{equation} 
where $H\in C(\Omega\times\RN)$ satisfies the structure conditions: $\exists b\in\R_+$ such that
\begin{equation}\label{H4}\tag{H4}
\left|H(x,q)\right|\leq b|q|\quad\forall (x,q)\in\Omega\times\RN
\end{equation}
and there exists $\omega$ a modulus of continuity such that 
\begin{equation}\label{H5}\tag{H5}
\left|H(x,q)-H(y,q)\right|\leq \omega(|x-y|(1+|q|))\quad\forall (x,y,q)\in\Omega^2\times\RN.
\end{equation}
Assumption \eqref{H5} is standard for the validity of comparison principle.

\medskip

In this section we are concerned with the existence of solutions within this large class of equation. We shall see that the existence results are very sensitive to the \lq\lq size of $b$\rq\rq\ in \eqref{H4}, even in in the simplest case  $\Omega=B_R$ and $H(x,Du)=b|Du|$   problem \eqref{pbH} has no solutions (in fact supersolutions) if $b$ is too large with respect $R$. This is the case for instance of the partial sums (see example 2, Section \ref{Sec2}) of the form 
\begin{equation}\label{4eq1}
F(D^2u)=\sum_{i=1}^k\lambda_{j_i}(D^2u), \quad k<N\quad\text{and}\quad j_1>1.
\end{equation}

\begin{proposition}\label{noexistenceH}
Let $F$ as in \eqref{4eq1}. If $bR\geq k$ then there are no positive viscosity supersolutions of 
\begin{equation}\label{4eq2}
F(D^2u)+b|Du|+u^{-\gamma}=0 \quad\text{in $B_R$.}
\end{equation}
\end{proposition}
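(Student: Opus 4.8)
The plan is to mimic the proof of Theorem \ref{th2}: build an explicit radial supersolution candidate for a smaller concentric ball, show that the purported supersolution of \eqref{4eq2} dominates it by comparison, and then let the radius of the small ball tend to a critical value at which the explicit solution blows up, reaching a contradiction. Concretely, for $\rho\in(0,R)$ I would look for a radial function $w(r)$ solving the ODE
\begin{equation*}
k\,\frac{w'(r)}{r}+b\,|w'(r)|+w(r)^{-\gamma}=0\quad\text{for }r\in(0,\rho),\qquad w'(0)=0,\ w(\rho)=0.
\end{equation*}
Because the zero order term is positive, any solution must have $w'\le 0$, so $|w'|=-w'$ and the ODE becomes linear in $w'$ after multiplying by an integrating factor; explicitly $\bigl(k\,r^{-1}-b\bigr)w'=-w^{-\gamma}$, i.e. $w' = -\dfrac{r\,w^{-\gamma}}{k-br}$. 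This is exactly where the hypothesis $bR\ge k$ enters: as $r\uparrow k/b\le R$ the denominator vanishes and $w'\to-\infty$, which forces the primitive of $w^\gamma w'$ to diverge and hence $w(0)$ (obtained by integrating from $r$ back to $0$, since $w$ is decreasing) to be unbounded as $\rho\uparrow k/b$.

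Next I must check that $w(|x|)$ is genuinely a viscosity solution — equivalently, since $w$ will be smooth on $(0,\rho)$, a classical solution — of $F(D^2 w)+b|Dw|+w^{-\gamma}=0$ in $B_\rho$. For a radial $C^2$ function the Hessian has eigenvalue $w''(r)$ in the radial direction and $w'(r)/r$ with multiplicity $N-1$ in the tangential directions. As in the earlier computations in the paper one shows $w''\le w'/r$: differentiating the ODE gives $\bigl(w''-\tfrac{w'}{r}\bigr)' = -\tfrac{k}{r}\bigl(w''-\tfrac{w'}{r}\bigr)-\text{(nonnegative terms)}\cdot$(something), so $r^{k}\bigl(w''-w'/r\bigr)$ is nonincreasing and vanishes at $0$, whence $w''\le w'/r<0$. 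Consequently all eigenvalues of $D^2w$ are negative and the two smallest are among the tangential ones, so for the operator $F=\sum_{i=1}^k\lambda_{j_i}$ with $j_1>1$ we have $\lambda_{j_1}(D^2w)=\cdots$ equal to copies of $w'/r$ (here I use $j_1>1$ and $k<N$ to guarantee the selected eigenvalues are all the tangential value $w'/r$, none being $w''$), so $F(D^2w)=k\,w'/r$, and the ODE says precisely $F(D^2w)+b|Dw|+w^{-\gamma}=0$. At the origin, $D^2w(0)=w''(0)I$ and a short separate check (or a limiting viscosity argument as in the infinity-Laplacian example) handles the single point $r=0$.

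Finally, suppose for contradiction that $u$ is a positive viscosity supersolution of \eqref{4eq2} in $B_R$. For each $\rho<k/b\le R$, $u$ restricted to $B_\rho$ is a positive supersolution of the same equation, $w(|x|)$ is a solution with $w=0\le u$ on $\partial B_\rho$, and the comparison principle (Theorem \ref{CP}, whose monotonicity-in-$u$ hypothesis holds and whose first-order structure conditions \eqref{H4}--\eqref{H5} are satisfied by $b|q|$) gives $u(x)\ge w(|x|)$ throughout $B_\rho$. Letting $\rho\uparrow k/b$ and using that $w(|x|)\to+\infty$ pointwise — since $w(0)\to\infty$ and $w$ is decreasing, so $w(|x|)\ge w(\text{anything slightly bigger})\to\infty$ for $|x|<k/b$ — we conclude $u(x)=+\infty$ for $|x|<k/b$, contradicting that $u$ is a (finite, positive) function. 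I expect the main obstacle to be the eigenvalue bookkeeping in the second step: one must argue carefully that under $j_1>1$, $k<N$ the operator $F$ applied to $D^2w$ picks out exactly $k$ copies of the tangential eigenvalue $w'/r$ and never the radial eigenvalue $w''$, which is what makes the reduction to the stated ODE exact rather than merely an inequality; the ODE blow-up analysis and the comparison step are then routine given the tools already in the paper.
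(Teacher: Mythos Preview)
Your approach is essentially the same as the paper's: build an explicit radial solution on $B_\rho$ for $\rho<k/b$, compare, and let $\rho\uparrow k/b$. The paper parametrizes by $\varepsilon=k/b-\rho$ and writes the explicit primitive
\[
u_\varepsilon(r)=\left(\frac{1+\gamma}{b}\left(r-\frac kb+\varepsilon+\frac kb\log\frac{k-br}{\varepsilon b}\right)\right)^{1/(1+\gamma)},
\]
but the structure of the argument is identical.

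One concrete error: your verification of $w''\le w'/r$ does not work as written. The ODE $k\,w'/r-b\,w'+w^{-\gamma}=0$ is \emph{first order} in $w'$, so the quantity $\bigl(w''-w'/r\bigr)'$ never enters; the differential inequality $\bigl(r^k(w''-w'/r)\bigr)'\le 0$ you invoke is lifted from the paper's analysis of the $\Pmk$ problem, where the governing ODE is genuinely second order, and it has no counterpart here. Instead, differentiate $w'=-\dfrac{r}{k-br}\,w^{-\gamma}$ directly (as the paper does) to get
\[
w''=\frac{w'}{r}-\frac{r}{(k-br)^2}\,w^{-2\gamma-1}\bigl(\gamma r+b\,w^{\gamma+1}\bigr)\le \frac{w'}{r},
\]
which is the inequality you need. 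With this correction your eigenvalue bookkeeping ($j_1>1$ forces $F(D^2w)=k\,w'/r$ since $\lambda_1=w''$ and $\lambda_2=\cdots=\lambda_N=w'/r$) and the comparison/blow-up step are exactly right.
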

\begin{proof}
For $\varepsilon>0$ the  function
$$
u_\varepsilon(r)=\left(\frac{1+\gamma}{b}\left(r-\frac kb+\varepsilon+\frac kb\log\frac{k-br}{\varepsilon b}\right)\right)^\frac{1}{1+\gamma}
$$
is solution of 
\begin{equation*}
\left\{
\begin{array}{cl}
k\frac{u'(r)}{r}-bu'(r)+u(r)^{-\gamma}=0 & \text{ for $r\in(0,\frac kb-\varepsilon)$}\\
u'(0)=0,\;u(\frac kb-\varepsilon)=0. & 
\end{array}\right.
\end{equation*}
Moreover for $r<\frac kb-\varepsilon$
\begin{equation}\label{4eq3}
u_\varepsilon'(r)=-\frac{r}{k-br}u_\varepsilon(r)^{-\gamma}\leq0
\end{equation}
and 
\begin{equation}\label{4eq4}
\begin{split}
u_\varepsilon''(r)&=\frac{u_\varepsilon'(r)}{r}-\frac{r}{(k-br)^2}u_\varepsilon(r)^{-2\gamma-1}\left(\gamma r+b u_\varepsilon(r)^{\gamma+1}\right)\\
&\leq\frac{u_\varepsilon'(r)}{r}.
\end{split}
\end{equation}
In view of \eqref{4eq3}-\eqref{4eq4} we infer that $u_\varepsilon(|x|)$ solves
\begin{equation*}
\left\{\begin{array}{cl}
F(D^2u_\varepsilon)+b|Du|+u_\varepsilon^{-\gamma}=0 & \text{in $B_{\frac kb-\varepsilon}$}\\
u_\varepsilon=0 & \text{on $\partial B_{\frac kb-\varepsilon}$}.
\end{array}\right.
\end{equation*}
Let us assume by contradiction that there exists $u$ positive supersolution of \eqref{4eq2}. Since $B_{\frac kb-\varepsilon}\subset B_R$ and $u_\varepsilon=0$ on $\partial B_{\frac kb-\varepsilon}$ then comparison principle yields $u(x)\geq u_\varepsilon(|x|)$ in $B_{\frac kb-\varepsilon}$. Sending $\varepsilon\to0$ we obtain that $u\equiv\infty$ in $B_{\frac kb}$, contradiction. 
\end{proof}


\begin{remark}\rm
It is worth to point out that the nonexistence result expressed by Proposition \ref{noexistenceH} is no longer valid in general if we replace $u^{-\gamma}$ with $\delta_\Omega(x)^\alpha u^{-\gamma}$, with $\alpha>0$. Indeed by a direct computation one can show that if $bR=k$ and $\alpha\in(0,1$),  then the function
\begin{equation*}
u(|x|)=\left((1+\gamma)\int_{|x|}^R\frac{s(R-s)^\alpha}{k-bs}\,ds\right)^{\frac{1}{1+\gamma}}=\left(\frac{(1+\gamma)R}{k\alpha(\alpha+1)}(R-|x|)^\alpha\left(R+\alpha|x|\right)\right)^{\frac{1}{1+\gamma}}
\end{equation*}
is solution of $\Ppk(D^2u)+b|Du|+\delta_{B_R}(x)^\alpha u^{-\gamma}=0$ in $B_R$ and $u=0$ on $\partial B_R$. 
\end{remark}

The above Proposition shows that the condition $bR\geq k$ is a real obstruction to the existence of solutions of \eqref{pbH}. Nevertheless, as soon as $bR<k$,  the analogous Theorem \ref{th1} holds true.

\begin{theorem}\label{th1H}
Let $\Omega$ be bounded and uniformly convex domain. Assume that $F$, $p(x)$ and  $H$  satisfy respectively the assumptions \eqref{H1}-\eqref{H2},  \eqref{H3} and  \eqref{H4}-\eqref{H5}. If $bR<k$ and   $\beta>-1$,  then for any $\gamma>0$ there exists a unique $u\in C(\overline\Omega)$ positive viscosity solution of \eqref{pbH}.
\end{theorem}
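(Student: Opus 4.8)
The plan is to transcribe the proof of Theorem \ref{th1} essentially verbatim, through Perron's method, checking only that the first order term $H(x,Du)$ is a harmless perturbation for the subsolution and that the hypothesis $bR<k$ is exactly what makes the radial supersolution in each ball $B_R(y)$ well defined. First I would record the comparison principle: since $u\mapsto p(x)u^{-\gamma}$ is nonincreasing, $p>0$ in $\Omega$ by \eqref{H3}, and $H$ satisfies \eqref{H5}, the arguments of \cite{CIL} behind Theorem \ref{CP} carry over to \eqref{pbH}; this gives uniqueness and licenses Perron's method once a continuous subsolution and a continuous supersolution of \eqref{pbH}, both vanishing on $\partial\Omega$, are produced.

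For the subsolution I would keep $\underline u(x)=\varepsilon\, d(x)^t$ with $t=\frac{\alpha+2}{\gamma+1}$ and $d$ the regularized distance of Theorem \ref{dist}. Since $D\underline u=\varepsilon t\, d^{t-1}Dd$ and $|Dd|\le B_1$ on $\Omega$ by Theorem \ref{dist}, one has $|H(x,D\underline u)|\le b\varepsilon t B_1\, d^{t-1}$, which on the bounded set $\Omega$ is $O(\varepsilon\, d^{t-2})$, i.e. of the same order in $d$ as the Pucci term $\Pmk(D^2\underline u)$, and hence — for $\varepsilon$ small — dominated by the singular term $c_1\delta_\Omega^\alpha\underline u^{-\gamma}=O(\varepsilon^{-\gamma}d^{t-2})$. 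Thus the chain of inequalities in the proof of Theorem \ref{th1} survives with the constant $C$ enlarged to absorb $b$, so $\underline u$ is a subsolution of \eqref{pbH} vanishing on $\partial\Omega$; note that no smallness of $b$ is needed here.

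The real work is the supersolution. As in Theorem \ref{th1} I would use $\Omega=\bigcap_{y\in Y}B_R(y)$ and set $\overline u(x)=\inf_{y\in Y}u_y(|x-y|)$, with $u_y$ a radial supersolution in $B_R(y)$ of $\Ppk(D^2u)+b|Du|+c_2\,\delta_{B_R(y)}(x)^\beta u^{-\gamma}=0$. For a radial $u(r)$ with $u'\le0$ and $u''\le\frac{u'}{r}$, the Hessian eigenvalue $\frac{u'}{r}$ has multiplicity $N-1\ge k$ and is the largest, so $\Ppk(D^2u)=k\frac{u'}{r}$, $|Du|=-u'$, and the equation becomes $\frac{k-br}{r}\,u'(r)=-c_2(R-r)^\beta u(r)^{-\gamma}$. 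For $\beta\in(-1,0]$ I would take
$$u_y(r)=\left((\gamma+1)\int_r^R\frac{c_2\,s\,(R-s)^\beta}{k-bs}\,ds\right)^{\frac{1}{1+\gamma}},$$
and for $\beta>0$ replace $\delta_{B_R(y)}(x)^\beta$ by $R^\beta$ and use the corresponding concave solution, exactly as in Theorem \ref{th1}. Here $bR<k$ is used twice: it makes $k-bs>0$ on $[0,R]$, so that the integrand is positive and $u_y'\le0$; and, since for $\beta\in(-1,0]$ the integrand is $O((R-s)^\beta)$ as $s\to R$, it makes the integral converge, so that $u_y$ is finite, positive on $[0,R)$ and zero at $R$ — the latter being why $\beta>-1$ remains necessary. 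Differentiating the ODE and writing $z=u''-\frac{u'}{r}$ gives $\frac{k-br}{r}z=b\frac{u'}{r}+c_2\gamma(R-r)^\beta u^{-\gamma-1}u'+c_2\beta(R-r)^{\beta-1}u^{-\gamma}$; for $\beta\le0$ the right-hand side is $\le0$ while $\frac{k-br}{r}>0$, so $z\le0$, justifying $\Ppk(D^2u_y)=k\frac{u_y'}{r}$ (for $\beta>0$ the substitute function is concave and the same holds). The integral formula also yields the uniform H\"older bound \eqref{holder} with $\sigma=\min\{\frac{1}{\gamma+1},\frac{\beta+1}{\gamma+1}\}$ and a uniform positive lower bound for $u_y$ in terms of $\delta_\Omega$, so $\overline u$ is H\"older on $\overline\Omega$, positive in $\Omega$ and null on $\partial\Omega$, just as before.

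It then remains to see that $\overline u$ is a supersolution of \eqref{pbH}. Since $F\le\Ppk$ and $H(x,q)\le b|q|$, it suffices that each $u_y$ be a supersolution of $\Ppk(D^2u)+b|Du|+c_2\delta_\Omega(x)^\beta u^{-\gamma}=0$ in $\Omega$; for $\beta\ge0$ this is immediate from $\delta_{B_R(y)}\ge\delta_\Omega$ and the $R^\beta$ bound. For $\beta\in(-1,0)$, given $x_0\in\Omega$ I would choose $y_0\in Y$ with $\delta_\Omega(x_0)=\delta_{B_R(y_0)}(x_0)$; since $\delta\mapsto(\gamma+1)\int_{R-\delta}^R\frac{c_2 s(R-s)^\beta}{k-bs}\,ds$ is increasing and $\delta_{B_R(y)}(x_0)\ge\delta_\Omega(x_0)$ for every $y$, one gets $\overline u(x_0)=u_{y_0}(|x_0-y_0|)$, so any $C^2$ function touching $\overline u$ from below at $x_0$ touches $u_{y_0}$ there, and the supersolution property of $u_{y_0}$ together with $\delta_{B_R(y_0)}(x_0)=\delta_\Omega(x_0)$ closes the argument. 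Perron's method between $\underline u$ and $\overline u$, with the comparison principle forcing the zero boundary values, then gives the unique $u\in C(\overline\Omega)$ solving \eqref{pbH}. I expect the only genuine obstacle to be the radial ODE analysis in the third paragraph — in particular recognizing that $bR<k$ simultaneously ensures $k-bs>0$ and integrability of the right-hand side near $s=R$; everything else is a routine adaptation of Theorem \ref{th1}.
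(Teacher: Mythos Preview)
Your proposal is correct and follows essentially the same route as the paper's sketch: the same subsolution $\underline u=\varepsilon d^{(\alpha+2)/(\gamma+1)}$ with $\varepsilon$ small enough to absorb the gradient term, the same radial supersolution $u_y(r)=\big(c_2(1+\gamma)\int_r^R\frac{s(R-s)^\beta}{k-bs}\,ds\big)^{1/(1+\gamma)}$ for $\beta\in(-1,0]$ (the paper also gives the explicit closed form $u_y(r)=\big(\tfrac{c_2R^\beta(1+\gamma)}{b}(r-R+\tfrac{k}{b}\log\tfrac{k-br}{k-bR})\big)^{1/(1+\gamma)}$ for $\beta>0$, which is precisely your ``replace $(R-r)^\beta$ by $R^\beta$'' solution), and the same infimum-over-balls construction of $\overline u$. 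Your verification that $u''\le u'/r$ and your identification of where $bR<k$ enters are more explicit than the paper's sketch but in full agreement with it.
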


\begin{proof}[Sketch of the proof]
The function $\underline u(x)=\varepsilon d(x)^{\frac{\alpha+2}{\gamma+1}}$ is still a subsolution by choosing $\varepsilon$ suitably small.\\
Concerning the existence of a supersolution of \eqref{pbH} vanishing on the boundary of $\Omega$, let us consider the problem 
\begin{equation}\label{4eq5}
\left\{\begin{array}{cl}
\Ppk(D^2u)+b|Du|+c_2\delta_{B_R(y)}^\beta u^{-\gamma}=0 & \text{in $B_R(y)$}\\
u=0 & \text{on $\partial B_R(y)$}.
\end{array}\right.
\end{equation}
Set $r=|x-y|$ for $x\in B_R(y)$. The function
$$
u_y(r)=\left(c_2(1+\gamma)\int_r^R\frac{s(R-s)^\beta}{k-bs}\,ds\right)^{\frac{1}{1+\gamma}}
$$
is solution of \eqref{4eq5} if $\beta\in(-1,0]$. Moreover $u\in C^{0,\frac{\beta+1}{\gamma+1}}(\overline B_R(y))$ uniformly with respect to $y$.\\
If instead $\beta>0$ then the function
$$
u_y(r)=\left(\frac{c_2 R^\beta(1+\gamma)}{b}\left(r-R+\frac kb\log\frac{k-br}{k-bR}\right)\right)^{\frac{1}{1+\gamma}}
$$
is supersolution of \eqref{4eq5}, $u=0$ on $\partial B_{R}(y)$ and $u\in C^{0,\frac{1}{\gamma+1}}(\overline B_R(y))$ uniformly with respect to $y$. Then according to the sign of $\beta$ and following the arguments of the proof of Theorem \ref{th1} with minor changes,  the function $$\overline u(x)=\inf_{y\in Y}u_y(|x-y|),\quad x\in\overline\Omega$$
provides the desired supersolution of \eqref{pbH}. 
\end{proof}

\begin{theorem}\label{th2H}
Let $bR\leq k$ and let $\gamma>0$. Then for any $\beta\leq-1$ the equation
\begin{equation}\label{4eq6}
\Ppk(D^2u)-b|Du|+\delta_{B_1}(x)^\beta u^{-\gamma}=0 \quad \text{in $B_R$}
\end{equation} 
has no viscosity supersolutions.
\end{theorem}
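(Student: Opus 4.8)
The plan is to adapt the proofs of Theorem \ref{th2} and Proposition \ref{noexistenceH}: on each ball $B_\rho$ with $\rho\in(0,1)$ I will exhibit an explicit radial solution of \eqref{4eq6} that vanishes on $\partial B_\rho$, observe that these solutions blow up as $\rho\uparrow 1$, and then invoke the comparison principle to force any supersolution of \eqref{4eq6} to be $+\infty$ throughout $B_1$, which is impossible. (For the statement to be consistent — e.g.\ with Theorem \ref{th1H} — one reads it with $\overline{B_1}\subseteq\overline{B_R}$, i.e.\ $R\ge1$, whence $bR\le k$ forces $b\le k$; if the intended distance were $\delta_{B_R}$, one replaces $1-r$ by $R-r$ throughout and lets $\rho\uparrow R$, and then $bR\le k$ is exactly what is needed.)

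For $\rho\in(0,1)$ I would take $w_\rho$ solving the singular ODE $k\,w'(r)/r+b\,w'(r)+(1-r)^\beta w(r)^{-\gamma}=0$, $w'(0)=0$, $w(\rho)=0$, namely
\[
w_\rho(r)=\left((1+\gamma)\int_r^\rho\frac{s\,(1-s)^\beta}{k+bs}\,ds\right)^{\frac{1}{1+\gamma}}.
\]
As in the two proofs cited, $w_\rho>0$ and $w_\rho'<0$ on $[0,\rho)$; moreover, since $\beta\le-1<0$ and $b\le k$, a direct computation of $w_\rho''$ gives $w_\rho''\le w_\rho'/r$. Because the eigenvalues of $D^2\big(w_\rho(|x|)\big)$ at $x\ne0$ are $w_\rho''(|x|)$ (simple) and $w_\rho'(|x|)/|x|$ (multiplicity $N-1\ge k$), this forces $\Ppk\big(D^2 w_\rho(|x|)\big)=k\,w_\rho'(|x|)/|x|$; together with $|D\big(w_\rho(|x|)\big)|=-w_\rho'(|x|)$ and the ODE this shows that $w_\rho(|x|)$ is a viscosity solution of $\Ppk(D^2u)-b|Du|+\delta_{B_1}(x)^\beta u^{-\gamma}=0$ in $B_\rho$, $u=0$ on $\partial B_\rho$ (it is classical away from the origin; at the origin $D^2\big(w_\rho(|x|)\big)=w_\rho''(0)I$ with $k\,w_\rho''(0)=-w_\rho(0)^{-\gamma}$, so the equation again holds with equality).

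Then, assuming for contradiction a positive viscosity supersolution $u$ of \eqref{4eq6} in $B_R$, I would fix $\rho\in(0,1)$, so that $B_\rho\subset B_1\subseteq B_R$ and $(1-|x|)^\beta$ is continuous and positive on $\overline{B_\rho}$. Since $w_\rho=0$ on $\partial B_\rho$ while $u>0$, we have $\limsup_{x\to\partial B_\rho}(w_\rho-u)(x)\le0$, so the comparison principle — which applies to the first order term $H(x,q)=-b|q|$ because it satisfies \eqref{H4}-\eqref{H5}, exactly as in the proof of Theorem \ref{th1H} — yields $w_\rho(|x|)\le u(x)$ in $B_\rho$. Letting $\rho\uparrow1$ and using that $\beta\le-1$ makes $\int_r^\rho\frac{s(1-s)^\beta}{k+bs}\,ds\to+\infty$ for each fixed $r<1$, I obtain $u(x)\ge\lim_{\rho\to1}w_\rho(|x|)=+\infty$ for every $x\in B_1$, a contradiction.

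The only delicate point — and the place where the hypothesis enters — is checking that the drift $b\,w'$ does not ruin the inequality $w_\rho''\le w_\rho'/r$ on which $\Ppk\big(D^2 w_\rho(|x|)\big)=k\,w_\rho'/r$ depends; this is what needs $b\le k$ (respectively $bR\le k$ in the $\delta_{B_R}$ version). One can also bypass it: radial concavity of $w_\rho$ already gives $\Ppk\big(D^2 w_\rho(|x|)\big)\ge k\,w_\rho'/r$, so $w_\rho(|x|)$ is in any case a subsolution, which is all the comparison step uses. The role of $\beta=-1$ is, as in Theorem \ref{th2}, that it is precisely the threshold at which $\int^1(1-s)^\beta\,ds$ diverges.
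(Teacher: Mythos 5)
Your argument is correct and follows the paper's proof essentially step by step: construct the explicit radial solution $w_\rho$ of the ODE on $B_\rho$, check that $w_\rho(|x|)$ is a (sub)solution, and let $\rho$ increase so that the divergence of $\int^\rho (R-s)^\beta\,ds$ for $\beta\leq -1$ forces any supersolution to be $+\infty$ by comparison. You also correctly flag the $\delta_{B_1}$ vs.\ $\delta_{B_R}$ typo in the statement of the theorem (the paper's own proof uses $(R-r)^\beta$, so $\delta_{B_R}$ is what is intended).

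Your closing remark is more than a side comment, though: since $k<N$, the eigenvalues of $D^2\big(w_\rho(|x|)\big)$ are $w_\rho''$ (simple) and $w_\rho'/r$ (multiplicity $N-1\geq k$), so $\Ppk\big(D^2 w_\rho(|x|)\big)\geq k\,w_\rho'(r)/r$ holds unconditionally, and together with $|D(w_\rho(|x|))|=-w_\rho'(r)$ and the ODE this makes $w_\rho(|x|)$ a viscosity \emph{subsolution} with no need of the inequality $w_\rho''\leq w_\rho'/r$. Since the comparison step only uses the subsolution property, this shows the hypothesis $bR\leq k$ is in fact superfluous in Theorem~\ref{th2H}: the nonexistence holds for every $b\geq 0$. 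The paper uses $bR\leq k$ only to upgrade $w_\rho(|x|)$ to an exact solution, which the argument does not require.
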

\begin{proof}
For any $\rho\in(0,R)$ the function
$$
w(r)=\left((1+\gamma)\int_{r}^\rho\frac{s(R-s)^\beta}{(k+bs)}\,ds\right)^{\frac{1}{1+\gamma}}
$$
is solution of 
\begin{equation*}
\left\{\begin{array}{cl}
k\frac{w'}{r}+bw'+(R-r)^\beta w^{-\gamma}=0 & \text{for $r\in(0,\rho)$}\\
w'(0)=0,\quad w(\varrho)=0. & 
\end{array}\right.
\end{equation*}
Using the assumptions $\beta\leq-1$ and $bR\leq k$ we have 
\begin{equation*}
\begin{split}
w''(r)&=\frac{w'}{r}-r\frac{(R-r)^{2\beta}}{(k+br)^2}w(r)^{-2\gamma-1}\left[\left(-b(R-r)-\beta(k+br)\right)(R-r)^{-\beta-1}w(r)^{\gamma+1}+r\gamma\right]\\
&\leq \frac{w'}{r}.
\end{split}
\end{equation*}
Hence we infer that $w(|x|)$ is solution 
\begin{equation*}
\left\{\begin{array}{cl}
\Ppk(D^2w)-b|Dw|+\delta_{B_1}(x)^{-\beta}w^{-\gamma}=0 & \text{in $B_\rho$}\\
w=0 & \text{on $\partial B_\rho$}.
\end{array}\right.
\end{equation*}
Since $w\to\infty$ as $\rho\to R$, then the comparison principle prevent the existence of supersolutions of \eqref{4eq6}.
\end{proof}

\bigskip
\noindent
\textsc{I. Birindelli, G. Galise}: Dipartimento di Matematica \lq\lq Guido Castelnuovo\rq\rq,\\
Sapienza Universit\`a di Roma, P.le Aldo Moro 2, I-00185 Roma, Italy.\\
E-mail: \texttt{isabeau@mat.uniroma1.it}\\
E-mail: \texttt{galise@mat.uniroma1.it}

\end{document}